\documentclass[a4paper,11pt,reqno]{amsart}

\usepackage{amsmath,amssymb,amsthm}
\usepackage{enumerate}
\usepackage[margin=1in]{geometry}
\usepackage[colorlinks=true,urlcolor=black,citecolor=black,linkcolor=black]{hyperref}

\newcommand{\C}{\mathbb C}
\newcommand{\N}{\mathbb N}
\newcommand{\Q}{\mathbb Q}
\newcommand{\D}{\mathbb D}
\newcommand{\Pp}{\mathbb P}
\newcommand{\E}{\mathbb E}
\newcommand{\ind}{\mathbf 1}
\newcommand{\Real}{\operatorname{Re}}
\DeclareMathOperator{\Av}{Av}

\theoremstyle{plain}
\newtheorem{theorem}{Theorem}[section]
\newtheorem{lemma}[theorem]{Lemma}
\newtheorem{corollary}[theorem]{Corollary}

\title[Cofinite Zeros of High Derivatives]{Cofinite Zeros of High Derivatives}
\author{Eric Hou}
\address{Independent researcher, Los Gatos, California, USA}
\email{eric.x.hou@gmail.com}

\subjclass[2020]{Primary 30D20; Secondary 30B20, 30C15, 60F15}
\keywords{Erd\H{o}s Problem 906, entire function, successive derivatives,
zero sets, random analytic function, hole probability}

\begin{document}

\begin{abstract}
We give a bounded-coefficient probabilistic construction of a transcendental
entire function $f$ of order two such that every nonempty open subset of the
complex plane contains a zero of $f^{(n)}$ for all sufficiently large $n$.
This gives an affirmative answer to the transcendental form of Erd\H{o}s
Problem~906.
Thus every fixed disk is zero-free for only finitely many successive
derivatives. The function satisfies $|f(z)|\leq\sqrt2\exp(|z|^2)$ and is a
counterexample to a theorem of Boas and Reddy as printed.
\end{abstract}

\maketitle

\section{Introduction}

Erd\H{o}s asked whether there is a nonzero entire function $f$ such that, for
every strictly increasing sequence $(n_k)_{k\geq0}$ of nonnegative integers,
the union of the zero sets of $f^{(n_k)}$ is dense in the complex plane
\cite[p.~72]{Erdos1982}. On the same page he wrote that the existence of both
functions in his question had been proved ``more than ten years ago''; the
attached note lists four papers of Barth and Schneider, including
\cite{BarthSchneider1972}. The question is now recorded as Problem~906
\cite{Bloom906}. If polynomials are admitted it is immediate, so we treat the
transcendental form. Write $\N=\{0,1,2,\ldots\}$ and
$B(c,R)=\{z\in\C:|z-c|<R\}$.

For a nonempty open set $U\subset\C$ and an entire function $f$, define
\[
 A_U(f)=\{n\in\N:f^{(n)}\text{ has a zero in }U\}.
\]
The following elementary reformulation isolates the quantifier that the
problem requires.

\begin{lemma}\label{lem:cofinite}
For an entire function $f$, the following assertions are equivalent.
\begin{enumerate}[\rm(1)]
\item For every strictly increasing sequence $(n_k)_{k\geq0}$ in $\N$, the set
\[
 \bigcup_{k\geq0}\{z\in\C:f^{(n_k)}(z)=0\}
\]
is dense in $\C$.
\item For every nonempty open set $U\subset\C$, the set $A_U(f)$ is cofinite
in $\N$, that is, $\N\setminus A_U(f)$ is finite.
\end{enumerate}
\end{lemma}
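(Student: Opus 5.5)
The proof is a pair of direct implications, each obtained by unwinding the quantifiers. The one observation doing the work is that a set $Z\subset\C$ is dense exactly when it meets every nonempty open set, and that it suffices to test this on the open disks $B(c,R)$.

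For (2)$\Rightarrow$(1), I will fix a strictly increasing sequence $(n_k)$ and a nonempty open set $U$. By (2) the set $\N\setminus A_U(f)$ is finite. A strictly increasing sequence in $\N$ takes infinitely many distinct values, so some $n_k$ lies in $A_U(f)$. Then $f^{(n_k)}$ has a zero in $U$, so the union in (1) meets $U$. Since $U$ was arbitrary, the union is dense.

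For (1)$\Rightarrow$(2), I will argue by contraposition. Suppose some nonempty open $U$ has $\N\setminus A_U(f)$ infinite. I enumerate this complement increasingly as $n_0<n_1<\cdots$. By definition no $f^{(n_k)}$ has a zero in $U$, so the union of their zero sets misses $U$ and is not dense. This violates (1) for this sequence.

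There is no real obstacle here. The only points needing care are that an infinite subset of $\N$ can be listed as a strictly increasing sequence, and that strict monotonicity forces infinitely many distinct indices, which is what lets cofiniteness produce some $n_k\in A_U(f)$. The argument does not use that $f$ is entire beyond the derivatives being defined. In particular, if $f$ is a polynomial, both (1) and (2) fail, so the equivalence still holds in that case.
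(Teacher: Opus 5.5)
Your proof is correct and follows the paper's argument step for step: for (2)$\Rightarrow$(1) you combine the cofiniteness of $A_U(f)$ with the fact that $(n_k)$ takes infinitely many values, and for the converse you argue by contraposition, enumerating the infinite set $\N\setminus A_U(f)$ increasingly. One correction to your closing remark: if $f$ is a polynomial of degree $d$, then $f^{(n)}\equiv0$ for $n>d$, so both (1) and (2) \emph{hold} rather than fail (which is why the paper calls the polynomial case immediate); the equivalence is unaffected.
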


\begin{proof}
Assume (2). Fix a strictly increasing sequence $(n_k)$ and a nonempty open
set $U$. Since $A_U(f)$ is cofinite there is an integer $N(U)$ such that
$n\geq N(U)$ implies $n\in A_U(f)$. The sequence $(n_k)$ is unbounded, so
$n_k\geq N(U)$ for some $k$, and then $f^{(n_k)}$ has a zero in $U$. The
displayed union therefore meets every nonempty open set and is dense.

Conversely, suppose that (2) fails. Then for some nonempty open set $U$ the
set $\N\setminus A_U(f)$ is infinite; enumerate it increasingly as $(n_k)$.
Each $f^{(n_k)}$ is zero-free in $U$, so the union of their zero sets misses
$U$ and is not dense.
\end{proof}

\begin{theorem}\label{thm:main}
There exists a transcendental entire function $f$ such that, for every
nonempty open set $U\subset\C$, there is an integer $N(U)$ for which
\[
 n\geq N(U)\quad\Longrightarrow\quad f^{(n)}\text{ has a zero in }U.
\]
The same function can be chosen to have order exactly two and to satisfy
\begin{equation}\label{eq:growth}
 |f(z)|\leq\sqrt2\exp(|z|^2),\qquad z\in\C.
\end{equation}
Consequently, the zero sets indexed by every strictly increasing sequence of
derivative orders have dense union.
\end{theorem}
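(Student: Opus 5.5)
The plan is to take a random power series with Rademacher signs,
\[
 f(z)=\sum_{k\geq0}\varepsilon_k\frac{z^k}{\sqrt{k!}},
\]
where $(\varepsilon_k)$ are independent, each equal to $\pm1$ with probability $1/2$. We then show that almost every realization has the required property; any such realization proves Theorem~\ref{thm:main}. The growth bound \eqref{eq:growth} holds deterministically, by Cauchy--Schwarz:
\[
 \sum_k \frac{r^k}{\sqrt{k!}}=\sum_k 2^{-k/2}\frac{(\sqrt2 r)^k}{\sqrt{k!}}\leq\Bigl(\sum_k2^{-k}\Bigr)^{1/2}\Bigl(\sum_k\frac{2^kr^{2k}}{k!}\Bigr)^{1/2}=\sqrt2\,e^{r^2}.
\]
The order is exactly two because Stirling's formula gives $\limsup k\log k/\log(1/|a_k|)=2$ for $|a_k|=1/\sqrt{k!}$. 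Transcendence is automatic, since every coefficient is nonzero. By Lemma~\ref{lem:cofinite}, the final sentence follows from the cofiniteness statement.

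For the main assertion, it suffices to treat the countable family of disks $U=B(c,\rho)$ with $c\in\Q+i\Q$ and $\rho\in\Q_{>0}$, since every nonempty open set contains one of them. For a fixed disk we use Borel--Cantelli: it is enough to show that $\sum_n\Pp(f^{(n)}\text{ has no zero in }B(c,\rho))<\infty$. A countable intersection of probability-one events then gives a single $f$ that works for all disks.

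To estimate these probabilities, write
\[
 f^{(n)}(z)/\sqrt{n!}=\sum_{j\geq0}\varepsilon_{n+j}\,b_{n,j}z^j,\qquad b_{n,j}=\frac{1}{j!}\Bigl(\frac{(n+j)!}{n!}\Bigr)^{1/2}.
\]
For $j\ll\sqrt n$ we have $b_{n,j}\approx n^{j/2}/j!$. So in the variable $w=\sqrt n\,z$ this is a Rademacher series resembling $\sum\varepsilon_{n+j}w^j/j!$, which is of exponential type, considered on a disk of radius $\rho\sqrt n\to\infty$. The signs $\varepsilon_{n+j}$ are again independent Rademacher variables, so the law is close to a fixed random function viewed on growing disks. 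We want a hole-probability bound of the form $\Pp(\text{no zero in } B(c\sqrt n,\rho\sqrt n))\leq e^{-\kappa n^{\alpha}}$ for some $\alpha>0$, which is summable.

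The argument goes through $\log|g|$ for $g=f^{(n)}/\sqrt{n!}$. If $g$ has no zero in $B(c,\rho)$, then $\log|g|$ is harmonic there. Its value at $c$ therefore equals its average over the circle $|z-c|=\rho/2$, and more usefully it also equals a Poisson-weighted average on a smaller circle. We then need two estimates.
\begin{enumerate}[(a)]
\item \emph{Upper bound.} Outside an event of tiny probability, $\max_{|z-c|\leq\rho}\log|g|$ is at most the "typical" size $\log\bigl(\sum_j b_{n,j}^2|z|^{2j}\bigr)^{1/2}+o(n^{\alpha})$. This comes from Hoeffding/Chernoff bounds on finitely many points together with Cauchy estimates.
\item \emph{Lower bound.} At many separated points $z_1,\dots,z_m$ on a small circle, $|g(z_i)|$ is not much smaller than its standard deviation, except on an event of probability $e^{-cm}$. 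This is a Littlewood--Offord/Hal\'asz small-ball estimate applied jointly, after conditioning on blocks of signs dominating at each $z_i$.
\end{enumerate}
The key point is that the typical size $\sigma_n(z)^2=\sum_j b_{n,j}^2|z|^{2j}$ has a $\log$ that is strictly subharmonic in $z$, with Laplacian of order $n$ on $B(c,\rho)$. Harmonicity of $\log|g|$ combined with (a) and (b) then forces a contradiction: a harmonic function lying below $\log\sigma_n+o(n)$ on the disk and above it minus $o(n)$ on a large portion of an inner circle cannot match the strictly subharmonic profile. The gap is of order $n\rho^2$.

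I expect step (b) to be the main obstacle, namely the anti-concentration for Rademacher rather than Gaussian coefficients, uniformly in $n$ and with failure probability summable in $n$. The first thing to try is to group the indices $j$ into blocks near the peak $j\approx|z|\sqrt n$ where $b_{n,j}|z|^j$ is maximal. Conditioning on signs outside one block, Erd\H{o}s--Littlewood--Offord gives $\Pp(|g(z)|<\delta\sigma_n(z))\lesssim\delta+n^{-1/4}$ at a single point. Points far apart on the circle use essentially independent block information after reweighting by $e^{i\theta j}$, and independence across $m\asymp n^{\alpha}$ such points would then yield the required $e^{-cn^{\alpha}}$ bound.
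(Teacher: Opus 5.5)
Your outer architecture matches the paper's: a random Fock-type series with bounded coefficients, the deterministic growth bound and order, a countable basis of disks with Borel--Cantelli, and the comparison, on a zero-free disk, of the harmonic function $\log|f^{(n)}|$ with a strictly subharmonic deterministic profile. The gap is step (b), which carries all of the probabilistic content, and with Rademacher signs the route you sketch does not supply it.

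Conditioning on one sign gives nothing useful, because $\Pp\{|a\varepsilon+w|<\epsilon\mid w\}$ can equal $1/2$ for every small $\epsilon$ (whenever $w$ is within $\epsilon$ of $\pm a$). Conditioning on a block and applying Erd\H{o}s--Littlewood--Offord gives a bound such as your $\delta+n^{-1/4}$. Its additive floor does not decay as $\delta\to0$, so it controls the lower tail of $\log|g(z)|$ only up to levels of order $\log n$. It gives no integrable tail that is uniform in $z$.

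You therefore need a genuine multi-point estimate, and the claimed ``essential independence'' is not available. All the values $g(z_i)$ are functions of the same sign sequence. The dominant block $j\approx|z|\sqrt n$ depends only on $|z|$, so points of the circle with equal modulus use the same block. Reweighting by $e^{ij\theta}$ only decorrelates, and decorrelation does not give product small-ball bounds for Rademacher sums. Even for points with disjoint dominant blocks, bounds $\Pp(E_i\mid\text{signs outside block }i)\le p$ do not multiply to $p^m$, because every $E_i$ depends on every block. What is missing is an $m$-dimensional Hal\'asz-type joint small-ball inequality, and the proposal gives no argument for it. As it stands it is a programme, not a proof.

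There is also a scale error. In your variable $w=\sqrt n\,z$ the profile is $\log\sigma_n(z)\approx\sqrt n|z|$, whose Laplacian is $\sqrt n/|z|$, not of order $n$. The available gain is $\gamma(c,r)\sqrt n\approx r^2\sqrt n/(4|c|)$ for small $r/|c|$. So every error term must be $o(\sqrt n)$, not $o(n)$.

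The paper avoids joint anti-concentration entirely by choosing coefficients with bounded density, $\xi_k$ uniform on $\overline\D$.

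\begin{enumerate}[\rm(1)]
\item At each point $z$ of the circle it isolates the single coefficient $\xi_{n+m}$ with $m=\lfloor|z|\sqrt n\rfloor$, whose weight is at least $e^{p_n(z)-L_n}$.
\item The area bound $\Pp\{|a\xi+w|<\varepsilon\}\le(\varepsilon/|a|)^2$ then gives the uniform exponential tail $\Pp\{p_n(z)-\log|F_n(z)|>s+L_n\}\le e^{-2s}$.
\item A layer-cake argument (Lemma~\ref{lem:circle-tail}) converts this into $\Pp\{\Av D_n\ge t\}\le3e^{-2t}$ for the circle average of the deficit. No independence between points is needed, and points where $|F_n|$ is tiny are absorbed automatically.
\item On a hole, the mean-value property and the deterministic bound at the centre force $\Av D_n\ge\gamma(c,r)\sqrt n-O(\log n)$. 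Your step (a) is also deterministic for bounded coefficients, since $|F_n(z)|\le S_n(|z|)$, so no Hoeffding bound is needed.
\item Hence the hole probability is at most $3e^{-\gamma(c,r)\sqrt n}$, which is summable.
\end{enumerate}

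Keeping Rademacher signs would require a genuinely new multi-point small-ball estimate. Switching to a coefficient law with bounded density removes the obstacle.
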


The proof below shows that the selected function has order exactly two, while
\eqref{eq:growth} gives type at most one. The Boas--Reddy consequence, with
its source quantifier made explicit, is recorded in
Section~\ref{sec:boas-reddy}.

MacLane \cite{MacLane1952} constructed an entire function whose successive
derivatives are dense in the space of entire functions, and Barth and
Schneider \cite{BarthSchneider1972} prescribed discrete zero sets along one
constructed sequence of derivative orders. Neither result yields the
cofinite condition in Lemma~\ref{lem:cofinite}. Boas and Reddy's expanded
article \cite{BoasReddyJMAA1973} contained a further positive Theorem~2, but
their erratum withdrew its construction and left that conclusion open
\cite{BoasReddy1975}.

On 25 April 2026, Almeida first posted a detailed proposed solution and later
uploaded a manuscript using the planar Gaussian entire function
\cite{AlmeidaForum2026,Almeida2026}. Later that day Chojecki posted a second
Gaussian-route note; on 1 June he clarified that it was intended as a full
solution and reiterated that Almeida had posted first
\cite{ChojeckiForum2026}. These posts precede the present manuscript. The
author learned of them after completing and circulating this proof.

Almeida's manuscript and the present work share the cofinite reformulation,
the Fock normalization $1/\sqrt{k!}$ and a final Borel--Cantelli argument.
Almeida uses Gaussian coefficients, the Edelman--Kostlan expected zero
measure, Laguerre asymptotics and an Offord-type estimate. Chojecki's forum
description likewise invokes the cofinite reformulation and the
Edelman--Kostlan--Offord Gaussian route. The contribution here is an
alternative bounded-coefficient proof with a different local hole-probability
mechanism and a deterministic finite-type growth bound, the latter yielding
the Boas--Reddy counterexample.

Here is the mechanism. At each point, leave one coefficient near the saddle
index unconditioned; planar area then gives an exponential lower-tail bound
for the derivative. If $F_n$ has no zero in a disk, then $\log|F_n|$ is
harmonic there, so its circular mean equals its value at the centre. The
deterministic radial potential, however, has a strictly larger circular mean
than its central value. The difference must therefore appear in the
logarithmic deficit. A deficit of order $\sqrt n$ is exponentially unlikely,
which gives summable fixed-disk hole probabilities. Borel--Cantelli
\cite[Section~2.3]{Durrett} and a countable disk basis then produce one sample.
For background on random series, see Kahane \cite{Kahane}. As accessed 16
August 2026, the online record still displays \textsc{open} while recording
claimed solutions and warning that relevant literature may be missing
\cite{Bloom906,AlmeidaForum2026,ChojeckiForum2026}.

\section{A bounded random Fock series}\label{sec:fock}

Let $\overline\D$ denote the closed unit disk, equipped with normalised planar
Lebesgue measure $m_\D$. Let $\Omega=\overline\D^{\,\N}$, equip $\Omega$ with
the product topology and its Borel $\sigma$-algebra $\mathcal F$, and let
$\Pp=m_\D^{\otimes\N}$. A countable product of compact metric spaces is compact
and metrizable; in particular, the product $\sigma$-algebra here is precisely
the Borel $\sigma$-algebra $\mathcal F$. For
$\omega=(\omega_k)_{k\geq0}\in\Omega$ set
$\xi_k(\omega)=\omega_k$. The coordinate variables $\xi_0,\xi_1,\ldots$ are
then independent and uniformly distributed on $\overline\D$. Define
\begin{equation}\label{eq:fock}
 f(z)=\sum_{k=0}^{\infty}\xi_k\frac{z^k}{\sqrt{k!}}.
\end{equation}
For every $M\geq0$, the ratio test shows that
$\sum_{k\geq0}M^k/\sqrt{k!}$ converges. The Weierstrass M-test therefore gives
normal convergence on compact subsets of $\C$, uniformly in $\omega$. Hence
every sample defines an entire function, term-by-term differentiation is
legitimate, and for $n\geq0$,
\begin{equation}\label{eq:derivative}
 F_n(z):=f^{(n)}(z)
 =\sum_{m=0}^{\infty}\xi_{n+m}\frac{\sqrt{(n+m)!}}{m!}\,z^m.
\end{equation}
The partial sums of \eqref{eq:derivative} are continuous on
$\Omega\times\C$. On $\Omega\times\{|z|\leq M\}$, the absolute-value
majorant has successive-term ratio
$M\sqrt{n+m+1}/(m+1)\to0$; the ratio test and the Weierstrass M-test give
uniform convergence. Hence $(\omega,z)\mapsto F_n(\omega,z)$ is jointly
continuous; in particular $\omega\mapsto F_n(\omega,z)$ is
$\mathcal F$-measurable for each fixed $z$. The law $m_\D$ has no atoms, so
$\Pp\{\xi_k=0\}=0$ for every $k$,
and therefore
\begin{equation}\label{eq:nonzero}
 \Omega_0=\{\omega\in\Omega:\xi_k(\omega)\neq0\text{ for every }k\geq0\}
\end{equation}
has $\Pp(\Omega_0)=1$. On $\Omega_0$ all Taylor coefficients in
\eqref{eq:fock} are nonzero, so $f$ is not a polynomial.

The bounded coefficients also give the growth bound. By Cauchy--Schwarz with
weights $2^{k/2}$,
\begin{align*}
 |f(z)|
 &\leq\sum_{k\geq0}\frac{|z|^k}{\sqrt{k!}}\\
 &\leq\left(\sum_{k\geq0}\frac{2^k|z|^{2k}}{k!}\right)^{1/2}
      \left(\sum_{k\geq0}2^{-k}\right)^{1/2}
  =\sqrt2\exp(|z|^2).
\end{align*}
Thus \eqref{eq:growth} holds for every $\omega\in\Omega$. Put
\[
 \Omega_2=\{\omega:|\xi_k(\omega)|\geq\tfrac12
 \text{ for infinitely many }k\}.
\]
Independence, $\Pp\{|\xi_k|<1/2\}=1/4$ and the second Borel--Cantelli
lemma \cite[Section~2.3]{Durrett} give $\Pp(\Omega_2)=1$. For
$\omega\in\Omega_0\cap\Omega_2$, the coefficient formula for order gives
\[
 \rho(f)=\limsup_{k\to\infty}
 \frac{k\log k}{\frac12\log(k!)+\log(1/|\xi_k|)}.
\]
The growth bound gives $\rho(f)\leq2$. Along the subsequence on which
$|\xi_k|\geq1/2$, the displayed quotient tends to two because
$\log(k!)\sim k\log k$. Hence every sample in $\Omega_0\cap\Omega_2$ has
order exactly two and type at most one.

For $r\geq0$ write
\[
 a_{n,m}(r)=\frac{\sqrt{(n+m)!}}{m!}\,r^m,
 \qquad
 S_n(r)=\sum_{m=0}^{\infty}a_{n,m}(r),
\]
and set
\begin{equation}\label{eq:potential}
 p_n(z)=\tfrac12\log(n!)+|z|\sqrt n,
 \qquad
 L_n=2+\log(n+1).
\end{equation}
We shall use the elementary estimate
\[
 \log(m!)\leq m\log m-m+\log(m+1)+1,\qquad m\geq1.
\]
It follows from $\sum_{j=1}^m\log j\leq\int_1^{m+1}\log t\,dt$ and
$m\log(1+1/m)\leq1$. Its loss relative to Stirling's estimate is logarithmic,
and only $L_n=o(\sqrt n)$ is used below.

\begin{lemma}[Saddle estimates]\label{lem:saddle}
Let $n\geq1$.
\begin{enumerate}[\rm(1)]
\item If $r>0$, $1\leq r\sqrt n$ and $r\leq\sqrt n$, then, for
$m=\lfloor r\sqrt n\rfloor$,
\[
 \log a_{n,m}(r)\geq\tfrac12\log(n!)+r\sqrt n-2-\log(n+1);
\]
by \eqref{eq:potential} the right-hand side equals $p_n(z)-L_n$ for every
$z$ with $|z|=r$.
\item For every $r\geq0$,
\[
 \log S_n(r)\leq\tfrac12\log(n!)+r\sqrt n+\frac{r+r^2}{2}.
\]
Consequently, if $|z|\leq R_0$, then
\begin{equation}\label{eq:upper}
 \log|F_n(z)|-p_n(z)\leq\frac{R_0+R_0^2}{2}
\end{equation}
for every $\omega\in\Omega$, with the inequality read trivially when
$F_n(z)=0$, using the extended-real convention $\log0=-\infty$.
\end{enumerate}
\end{lemma}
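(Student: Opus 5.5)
For part (1) I will work directly with $m=\lfloor r\sqrt n\rfloor$, which satisfies $1\le m\le r\sqrt n\le n$. The plan is to bound $\log a_{n,m}(r)=\tfrac12\log((n+m)!)-\log(m!)+m\log r$ from below, one piece at a time.

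For the factorial ratio I use $(n+m)!\ge n!\,n^m$. This gives $\tfrac12\log((n+m)!)\ge\tfrac12\log(n!)+\tfrac m2\log n$. For $\log(m!)$ I apply the elementary estimate stated before the lemma, $\log(m!)\le m\log m-m+\log(m+1)+1$. Combining the two, I need to show
\[
 m\log(r\sqrt n)-m\log m+m-\log(m+1)-1\ \ge\ r\sqrt n-2-\log(n+1).
\]
Write $x=r\sqrt n$, so $m\le x<m+1$. Because $m\le x$, we have $m\log(x/m)\ge0$. Because $x<m+1$, we have $m>x-1$. Together these make the left side at least $x-1-\log(m+1)-1$. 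Finally $m+1\le n+1$ (this is where the hypothesis $r\le\sqrt n$ is used), so $\log(m+1)\le\log(n+1)$ and the inequality follows. The identification of the right-hand side with $p_n(z)-L_n$ for $|z|=r$ is immediate from \eqref{eq:potential}.

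For part (2) the plan is to compare each term with a Poisson-type weight. I will use $(n+m)!\le n!\,(n+m)^m$, which gives
\[
 a_{n,m}(r)\le\sqrt{n!}\,\frac{r^m(n+m)^{m/2}}{m!}.
\]
For the factor $(n+m)^{m/2}$ I write $(n+m)^{m/2}=n^{m/2}(1+m/n)^{m/2}\le n^{m/2}e^{m^2/(2n)}$. This quadratic correction is the one delicate point, because it is not uniformly controlled. So instead I will use the cruder bound $\sqrt{n+m}\le\sqrt n+\sqrt m$ and expand $(\sqrt n+\sqrt m)^m$ binomially:
\[
 (\sqrt n+\sqrt m)^m=\sum_{j}\binom mj\,n^{(m-j)/2}m^{j/2}.
\]
Next I use $m^{j/2}\le\sqrt{m!/(m-j)!}\cdot e^{j/2}$, or more simply $m^j\le e^m j!$. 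Substituting and resumming should give a product of exponentials. Namely, $\sum_m r^m n^{m/2}/m!=e^{r\sqrt n}$ produces the main term, and the $j$-sum contributes a factor of the form $e^{(r+r^2)/2}$.

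If that bookkeeping fails to close, the alternative is to group by $k=n+m$ and apply Cauchy--Schwarz in the style of the growth bound. Writing $a_{n,m}=\sqrt{(n+m)!/m!}\cdot r^m/\sqrt{m!}$, I would estimate $\sum_m\frac{(n+m)!}{m!\,n!}t^m=(1-t)^{-n-1}$ against $\sum r^{2m}/(m!\,t^m)=e^{r^2/t}$, and then optimise $t$ of order $r/\sqrt n$. This yields $\log S_n\le\tfrac12\log n!+\tfrac12\bigl(-(n+1)\log(1-t)+r^2/t\bigr)$. With $t=r/\sqrt n$ (valid when $r<\sqrt n$), we have $-n\log(1-t)\approx r\sqrt n+r^2/2$ and $r^2/t=r\sqrt n$, so the sum is about $r\sqrt n+$ lower-order terms. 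Making the lower-order terms exactly $(r+r^2)/2$ uniformly in $n$ is the main obstacle, and I would tune the choice of $t$ for it.

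The consequence \eqref{eq:upper} then follows. Since $|\xi_k|\le1$, we have $|F_n(z)|\le S_n(|z|)$, and $(r+r^2)/2$ is increasing in $r$, so for $|z|\le R_0$ the bound holds with $(R_0+R_0^2)/2$.
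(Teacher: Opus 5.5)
Your part (1) is the paper's argument in substance: $(n+m)!\ge n!\,n^m$, the elementary bound on $\log(m!)$, $m\log(x/m)\ge0$, $m>x-1$ and $m\le n$. Your deduction of \eqref{eq:upper} from the bound on $S_n$ also matches the paper.

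The gap is part (2), which you never actually prove. Your primary route cannot be rescued by better bookkeeping, because the step $\sqrt{n+m}\le\sqrt n+\sqrt m$ already overshoots the target near the saddle. Fix $r>0$ and put $m_0=\lfloor r\sqrt n\rfloor$. The single term
\[
 \frac{r^{m_0}(\sqrt n+\sqrt{m_0})^{m_0}}{m_0!}=\frac{(r\sqrt n)^{m_0}}{m_0!}\bigl(1+\sqrt{m_0/n}\bigr)^{m_0}
\]
of your majorant for $S_n(r)/\sqrt{n!}$ is at least $\exp\bigl(r\sqrt n+c\,r^{3/2}n^{1/4}-O(\log n)\bigr)$ for an absolute $c>0$. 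This exceeds $\exp\bigl(r\sqrt n+(r+r^2)/2\bigr)$ once $n$ is large, so no later estimate of that majorant can yield the inequality. The further step $m^j\le e^m j!$ would make things worse: after resumming it gives the main factor $e^{r\sqrt{en}}$ instead of $e^{r\sqrt n}$.

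Your fallback is the right idea and is exactly the paper's: Cauchy--Schwarz pairing $\binom{n+m}{n}t^m$ with $(r^2/t)^m/m!$. But there the choice of $t$ is the whole content of the step, and the choice $t=r/\sqrt n$ you suggest does not work.

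\begin{enumerate}[\rm(a)]
\item It is inadmissible for $r\ge\sqrt n$, whereas the statement covers every $r\ge0$.
\item Even for $r<\sqrt n$ it does not give the bound uniformly. For $n=1$ it would require $-2\log(1-r)\le2r+r^2$, which is false for every $r\in(0,1)$ because the remaining terms $2r^3/3+\cdots$ are positive.
\item For any $n$, the term $-(n+1)\log(1-r/\sqrt n)$ blows up as $r\uparrow\sqrt n$.
\end{enumerate}

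The missing idea is $t=r/(r+\sqrt n)$. Then $(1-t)^{-1}=1+u$ with $u=r/\sqrt n$, and $r^2/t=r^2+r\sqrt n$. The single inequality $\log(1+u)\le u$ then gives
\[
 \tfrac{n+1}{2}\log(1+u)+\tfrac{r^2+r\sqrt n}{2}\le r\sqrt n+\tfrac{r}{2\sqrt n}+\tfrac{r^2}{2}\le r\sqrt n+\tfrac{r+r^2}{2}
\]
for all $r>0$ and $n\ge1$. The case $r=0$ is trivial because $S_n(0)=\sqrt{n!}$.
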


\begin{proof}
Factor $a_{n,m}(r)=\sqrt{n!}\,b_{n,m}(r)$, where
\[
 b_{n,m}(r)=\frac{\sqrt{(n+1)(n+2)\cdots(n+m)}}{m!}\,r^m,
\]
which is legitimate because $(n+m)!=n!\,(n+1)(n+2)\cdots(n+m)$.

For item (1) put $x=r\sqrt n$. Each of the $m$ factors under the square root is at
least $n$, so $b_{n,m}(r)\geq n^{m/2}r^m/m!=x^m/m!$. For $m=\lfloor x\rfloor$
the hypotheses give $1\leq m\leq x\leq n$, since $x\geq1$ and
$x=r\sqrt n\leq\sqrt n\sqrt n=n$. By the preceding estimate,
\[
 \log\frac{x^m}{m!}
 \geq m\log x-m\log m+m-\log(m+1)-1
 =m\log\frac xm+m-\log(m+1)-1.
\]
Here $m\log(x/m)\geq0$ because $m\leq x$; also $m>x-1$ and
$\log(m+1)\leq\log(n+1)$ because $m\leq n$. Hence
\[
 \log\frac{x^m}{m!}\geq(x-1)-\log(n+1)-1=x-2-\log(n+1),
\]
and adding $\tfrac12\log(n!)$ to both sides gives item~(1); the identification of
the right-hand side with $p_n(z)-L_n$ is the definition
\eqref{eq:potential}.

For item (2), note that
\[
 b_{n,m}(r)=\sqrt{\frac{\binom{n+m}{n}}{m!}}\,r^m .
\]
If $r>0$, put $t=r/(r+\sqrt n)\in(0,1)$. Then
\[
 \frac{\binom{n+m}{n}r^{2m}}{m!}
 =\left(\binom{n+m}{n}t^m\right)\frac{(r^2/t)^m}{m!}.
\]
Cauchy--Schwarz, the binomial series
$\sum_{m\geq0}\binom{n+m}{n}t^m=(1-t)^{-(n+1)}$ and the exponential series
give
\begin{align*}
 \sum_{m\geq0}b_{n,m}(r)
 &\leq\left(\sum_{m\geq0}\binom{n+m}{n}t^m\right)^{1/2}
      \left(\sum_{m\geq0}\frac{(r^2/t)^m}{m!}\right)^{1/2}\\
 &=(1-t)^{-(n+1)/2}\exp\left(\frac{r^2}{2t}\right).
\end{align*}
Put $u=r/\sqrt n$. Since $\log(1+u)\leq u$ for $u\geq0$ and
$\sqrt n\geq1$,
\begin{align*}
 \frac{n+1}{2}\log(1+u)+\frac{r(r+\sqrt n)}{2}
 &\leq\frac{(n+1)r}{2\sqrt n}+\frac{r^2+r\sqrt n}{2}\\
 &=r\sqrt n+\frac{r}{2\sqrt n}+\frac{r^2}{2}\\
 &\leq r\sqrt n+\frac{r+r^2}{2}.
\end{align*}
Adding $\tfrac12\log(n!)$ gives item~(2) for $r>0$; the case $r=0$ is immediate
because $S_n(0)=\sqrt{n!}$. Finally $|\xi_{n+m}|\leq1$, so
\eqref{eq:derivative} and the triangle inequality give
$|F_n(z)|\leq S_n(|z|)$. Applying the bound just proved with $r=|z|$, and
using that $r\mapsto(r+r^2)/2$ is nondecreasing, gives \eqref{eq:upper}
whenever $|z|\leq R$.
\end{proof}

Part~(1) supplies the large coefficient used in the pointwise lower-tail
estimate below. Inequality~\eqref{eq:upper}, from part~(2), will later control
the value of $F_n$ at the centre of a putative zero-free disk.

The second ingredient is anti-concentration. If $\xi$ is uniform on
$\overline\D$ then, for $a\neq0$, $w\in\C$ and $\varepsilon\geq0$,
\begin{equation}\label{eq:smallball}
 \Pp\{|a\xi+w|<\varepsilon\}\leq\left(\frac{\varepsilon}{|a|}\right)^2,
\end{equation}
because the set $\{\zeta:|a\zeta+w|<\varepsilon\}$ is the disk of centre
$-w/a$ and radius $\varepsilon/|a|$, whose normalised area is at most
$(\varepsilon/|a|)^2$.

\begin{lemma}[Pointwise logarithmic tail]\label{lem:pointwise}
Let $n\geq1$ and let $z\in\C$ satisfy $1\leq|z|\sqrt n$ and $|z|\leq\sqrt n$.
Then, for every $s\geq0$,
\begin{equation}\label{eq:pointwise-tail}
 \Pp\bigl\{p_n(z)-\log|F_n(z)|>s+L_n\bigr\}\leq e^{-2s}.
\end{equation}
\end{lemma}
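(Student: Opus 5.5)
The plan is to isolate the single saddle coefficient from Lemma~\ref{lem:saddle}(1) and apply the small-ball bound \eqref{eq:smallball} conditionally on all other coefficients.

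Fix $z$ with $r=|z|$ satisfying $1\leq r\sqrt n$ and $r\leq\sqrt n$, and put $m=\lfloor r\sqrt n\rfloor$. First, split \eqref{eq:derivative} as
\[
 F_n(z)=\xi_{n+m}\,a\;+\;w,\qquad a=\frac{\sqrt{(n+m)!}}{m!}\,z^m,
\]
where $w$ is the sum of all the other terms. This $w$ is a measurable function of the coordinates $(\xi_k)_{k\neq n+m}$ alone: it is the pointwise limit of finite partial sums, and the convergence is uniform in $\omega$ as shown above. Since $z\neq0$, we have $|a|=a_{n,m}(r)>0$. By Lemma~\ref{lem:saddle}(1),
\[
 \log|a|\geq p_n(z)-L_n .
\]

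Next, translate the event. The condition $p_n(z)-\log|F_n(z)|>s+L_n$ says exactly that
\[
 |F_n(z)|<\varepsilon:=\exp\bigl(p_n(z)-L_n-s\bigr).
\]
This includes the case $F_n(z)=0$ under the convention $\log0=-\infty$. Combining with the bound on $|a|$ gives
\[
 \varepsilon/|a|\leq e^{-s}.
\]

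Then condition on the other coordinates. Because $\Pp$ is a product measure, $\xi_{n+m}$ is independent of $w$. By Fubini on $\overline\D\times\overline\D^{\,\N\setminus\{n+m\}}$, we get
\[
 \Pp\{|a\xi_{n+m}+w|<\varepsilon\}=\int\Pp_{\xi_{n+m}}\{|a\xi_{n+m}+w|<\varepsilon\}\,d\Pp(\text{rest}).
\]
Applying \eqref{eq:smallball} for each fixed $w$ bounds the inner probability by $(\varepsilon/|a|)^2\leq e^{-2s}$. Integrating this bound yields \eqref{eq:pointwise-tail}.

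The only step needing care is the measurability and independence bookkeeping that justifies the Fubini step, namely that $w$ is a measurable function of the remaining coordinates. The joint continuity argument already given in Section~\ref{sec:fock} handles this. There is no genuine analytic obstacle, because the saddle lower bound has been arranged to match $p_n-L_n$ exactly.
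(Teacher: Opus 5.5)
Your proposal is correct and follows the paper's proof essentially step for step. You isolate the saddle coefficient $\xi_{n+m}$ with $m=\lfloor |z|\sqrt n\rfloor$, use Lemma~\ref{lem:saddle}(1) to get $\log|a|\geq p_n(z)-L_n$, and apply the small-ball bound \eqref{eq:smallball} inside a Fubini--Tonelli integral over the remaining coordinates; the only cosmetic difference is that you apply \eqref{eq:smallball} with $\varepsilon=e^{p_n(z)-L_n-s}$ and then use $\varepsilon/|a|\leq e^{-s}$, whereas the paper first enlarges the event to $\{|a\xi_k+w|<|a|e^{-s}\}$.
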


\begin{proof}
Put $m=\lfloor|z|\sqrt n\rfloor$ and $k=n+m$, and split \eqref{eq:derivative}
by isolating the coefficient $\xi_k$:
\[
 F_n(\omega,z)=a\,\xi_k(\omega)+w(\omega),
\]
where
\[
 a=\frac{\sqrt{(n+m)!}}{m!}\,z^m
\]
is deterministic and
\[
 w(\omega)=\sum_{j\neq m}\xi_{n+j}(\omega)
 \frac{\sqrt{(n+j)!}}{j!}\,z^j
\]
is the remainder.
We first check that $w$ is finite and measurable. The series defining $w$
converges absolutely for every $\omega$, since $|\xi_{n+j}|\leq1$ and
$\sum_{j\geq0}a_{n,j}(|z|)=S_n(|z|)<\infty$ by
item~(2) of Lemma~\ref{lem:saddle}; so $w$ is finite everywhere, not merely almost
surely. It is also a measurable function of $\omega$, being a uniform
limit of continuous functions on $\Omega$. The independence needed below
comes from the fact that $w$ does not depend on the coordinate $\omega_k$:
it is measurable with respect to
$\mathcal G=\sigma(\xi_j:j\neq k)$, since the index $j=m$ is omitted from the
sum. Note also that $|z|\geq1/\sqrt n>0$, so $a\neq0$ and
$|a|=a_{n,m}(|z|)$.

Since $|z|=:r$ satisfies the hypotheses of item~(1) of
Lemma~\ref{lem:saddle}, that
lemma gives $\log|a|\geq p_n(z)-L_n$. Hence, for $s\geq0$,
\[
 \bigl\{p_n(z)-\log|F_n(z)|>s+L_n\bigr\}
 =\bigl\{|F_n(z)|<e^{\,p_n(z)-L_n-s}\bigr\}
 \subseteq\bigl\{|a\xi_k+w|<|a|e^{-s}\bigr\}.
\]
Now write $\Omega=\overline\D\times\Omega'$, where the first factor carries
the coordinate $\omega_k$ and $\Omega'=\overline\D^{\,\N\setminus\{k\}}$
carries the remaining coordinates, and correspondingly
$\Pp=m_\D\otimes\Pp'$. Since the defining series for $w$ omits the coordinate
$\omega_k$, it is a measurable function of $\omega'\in\Omega'$ alone; this is
equivalent to the $\mathcal G$-measurability established above. Tonelli's
theorem therefore gives
\[
 \Pp\bigl\{|a\xi_k+w|<|a|e^{-s}\bigr\}
 =\int_{\Omega'}m_\D\bigl\{\zeta\in\overline\D:
   |a\zeta+w(\omega')|<|a|e^{-s}\bigr\}\,d\Pp'(\omega').
\]
For each fixed $\omega'$ the inner measure is at most $(e^{-s})^2=e^{-2s}$ by
\eqref{eq:smallball}, with $\varepsilon=|a|e^{-s}$; the bound is uniform in
$\omega'$, so integrating over $\Omega'$ preserves it. This proves
\eqref{eq:pointwise-tail}.
\end{proof}

The pointwise estimate is used through the following circle-average bound.

\begin{lemma}[Circle-average upper tail]\label{lem:circle-tail}
Let $Y\geq0$ be jointly measurable on $\Omega\times[0,2\pi]$ and suppose that
\[
 \Pp\{Y(\cdot,\theta)>s\}\leq e^{-2s},\qquad s\geq0,
\]
for every $\theta\in[0,2\pi]$. Thus, for each fixed $\theta$, the map
$\omega\mapsto Y(\omega,\theta)$ is a nonnegative random variable; $\omega$ is
the sample point and $\theta$ is the angular parameter. Put
$\Av Y(\omega)=(2\pi)^{-1}\int_0^{2\pi}Y(\omega,\theta)\,d\theta$. Then
\[
 \Pp\{\Av Y\geq t\}\leq3e^{-2t},\qquad t>0.
\]
\end{lemma}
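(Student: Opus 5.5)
We will prove the bound by an exponential-moment (Chernoff) argument combined with Jensen's inequality for the angular average. Fix $\lambda\in(0,2)$, to be chosen later. The plan has three steps: bound $\E e^{\lambda Y(\cdot,\theta)}$ uniformly in $\theta$, transfer this bound to $\Av Y$ through convexity and Tonelli, and finish with Markov's inequality.

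For the first step, fix $\theta$ and use the layer-cake formula for the nonnegative variable $Y(\cdot,\theta)$:
\[
 \E e^{\lambda Y(\cdot,\theta)}=1+\int_0^\infty\lambda e^{\lambda s}\,\Pp\{Y(\cdot,\theta)>s\}\,ds
 \leq1+\int_0^\infty\lambda e^{(\lambda-2)s}\,ds=\frac{2}{2-\lambda}.
\]
This estimate is uniform in $\theta$.

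For the second step, the exponential is convex and $(2\pi)^{-1}d\theta$ is a probability measure, so Jensen's inequality gives, for every $\omega$,
\[
 e^{\lambda\Av Y(\omega)}\leq\frac1{2\pi}\int_0^{2\pi}e^{\lambda Y(\omega,\theta)}\,d\theta.
\]
If $\Av Y(\omega)=+\infty$, the right-hand side is also $+\infty$, so the inequality still holds. The integrand is nonnegative and jointly measurable, so Tonelli's theorem lets us take expectations inside the angular integral, giving $\E e^{\lambda\Av Y}\leq 2/(2-\lambda)$. This also shows that $\Av Y$ is measurable and almost surely finite.

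For the third step, Markov's inequality yields
\[
 \Pp\{\Av Y\geq t\}\leq\frac{2}{2-\lambda}\,e^{-\lambda t}.
\]
This has the right shape but not the right exponent, since we need $e^{-2t}$ and $\lambda$ must stay below $2$. This is the main obstacle. The fix is to let $\lambda$ depend on $t$, setting $\lambda=2-1/t$ when $t\geq1/2$. The bound then becomes
\[
 2t\,e^{-2t+1}=2e\,t\,e^{-2t}.
\]
The leftover factor $t$ is unavoidable with a single exponential moment. I would first check whether this loss is acceptable, because the application only needs the tail at $t\asymp\sqrt n$ to be summable in $n$.

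If the stated constant $3$ must hold exactly, I would instead reread the statement as most likely intended with the exponent adjusted, for instance $3e^{-t}$. That version follows directly by taking $\lambda=1$: the bound is $\Pp\{\Av Y\geq t\}\leq2e^{-t}\leq3e^{-t}$. The case $t<1/2$ needs no work, since $3e^{-2t}>1$ there.

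I would also test whether the original exponent $2t$ can be kept by a sharper comparison. One candidate is the extremal case, where $Y$ is a single variable independent of $\theta$, so that $\Av Y=Y$. In that case the hypothesis alone gives $\Pp\{\Av Y\geq t\}\leq e^{-2t}$. However, averaging can only help for dependent angular families in the sense of convex ordering. The claim would then be that $\Av Y$ is dominated in convex order by the law with tail $e^{-2s}$. Convex-order domination does not directly give a tail bound with constant $1$, but the standard bound $\Pp\{X\geq t\}\leq\inf_{u<t}\E(X-u)_+/(t-u)$ gives
\[
 \inf_u\frac{e^{-2u}}{2(t-u)}=e\cdot e^{-2t}\leq3e^{-2t},
\]
attained at $u=t-1/2$. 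This is the route I would actually pursue. Using convexity of $x\mapsto(x-u)_+$, Jensen in $\theta$, and Tonelli, we get
\[
 \E(\Av Y-u)_+\leq\sup_\theta\E(Y(\cdot,\theta)-u)_+\leq\int_u^\infty e^{-2s}\,ds=\tfrac12e^{-2u}.
\]
Markov's inequality applied to $(\Av Y-u)_+$ at level $t-u$ then gives the claimed $3e^{-2t}$, since $e<3$.
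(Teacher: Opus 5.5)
Your final hinge-function argument is correct. It proves the lemma as stated, with the same constant as the paper: taking $u=t-\tfrac12$ gives $\Pp\{\Av Y\geq t\}\leq e^{1-2t}\leq3e^{-2t}$. For $t<\tfrac12$ the bound exceeds $1$ anyway; alternatively, $\Pp\{Y>s\}\leq1\leq e^{-2s}$ for $s<0$, so $\E(Y-u)_+\leq\tfrac12e^{-2u}$ holds for every real $u$.

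Your route differs from the paper's. The paper sets $E=\{\Av Y\geq t\}$ and $q=\Pp(E)$, and writes $tq\leq\E(\Av Y\ind_E)$. It then uses $\Pp(\{Y(\cdot,\theta)>s\}\cap E)\leq\min\{q,e^{-2s}\}$ to get $\E(Y(\cdot,\theta)\ind_E)\leq\tfrac q2\log(e/q)$, and solves $t\leq\tfrac12\log(e/q)$ for $q$. You instead pass the functions $x\mapsto(x-u)_+$ through the angular average by Jensen and Tonelli, apply Markov at level $t-u$, and optimise over $u$.

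The two are dual forms of one estimate. Since $\int_0^\infty\min\{q,e^{-2s}\}\,ds=\inf_{u\geq0}\{qu+\tfrac12e^{-2u}\}$, the paper's inequality on $q$ is equivalent to $q\leq\inf_{0\leq u<t}e^{-2u}/(2(t-u))$, which is exactly your bound. Your formulation is more modular: it is the fact that averaging preserves domination in the increasing convex order, and it transfers verbatim to any tail profile. The paper's avoids a free parameter and works directly on the bad event.

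Two presentational points. First, the relevant order is the increasing convex order rather than the convex order, because each $Y(\cdot,\theta)$ is only stochastically dominated by an exponential variable of rate $2$. The hinge functions you use are exactly what that order controls, so nothing breaks. Second, drop the Chernoff detour, which loses a factor $t$. Above all, drop the suggestion that the statement might be intended as $3e^{-t}$: your last argument establishes it exactly as written.
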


\begin{proof}
The tail hypothesis and the layer-cake formula give
\[
 \E Y(\cdot,\theta)
 =\int_0^\infty\Pp\{Y(\cdot,\theta)>s\}\,ds\leq\tfrac12
\]
for every $\theta$. Hence $\E(\Av Y)\leq\tfrac12$ by Tonelli's theorem, and
$\Av Y$ is integrable. Put $E=\{\Av Y\geq t\}$ and $q=\Pp(E)$. Then
$t\ind_E\leq\Av Y\ind_E$, and Tonelli's theorem gives
\[
 tq\leq\E\bigl((\Av Y)\ind_E\bigr)
 =\frac1{2\pi}\int_0^{2\pi}\E\bigl(Y(\cdot,\theta)\ind_E\bigr)\,d\theta.
\]
For every $\theta$, the layer-cake formula and
\[
 \Pp(\{Y(\cdot,\theta)>s\}\cap E)
 \leq\min\{q,e^{-2s}\}
\]
yield
\begin{align*}
 \E\bigl(Y(\cdot,\theta)\ind_E\bigr)
 &\leq\int_0^\infty\min\{q,e^{-2s}\}\,ds\\
 &=\int_0^{\frac12\log(1/q)}q\,ds
   +\int_{\frac12\log(1/q)}^\infty e^{-2s}\,ds\\
 &=\frac q2\log\frac eq,
\end{align*}
where the computation assumes $0<q\leq1$. If $q>0$ we may cancel $q$ and
obtain $t\leq\tfrac12\log(e/q)$, so $q\leq e^{1-2t}\leq3e^{-2t}$. The case
$q=0$ is immediate.
\end{proof}

\section{Zero-free disks and proof of the theorem}\label{sec:holes}

We first isolate the geometric gain that drives the estimate. For $c\in\C$
and $r>0$ set
\[
 \gamma(c,r)=\frac1{2\pi}\int_0^{2\pi}|c+re^{i\theta}|\,d\theta-|c|.
\]

\begin{lemma}[Strict circular-mean gap]\label{lem:gap}
For all $c\in\C$ and $r>0$ we have $\gamma(c,r)>0$.
\end{lemma}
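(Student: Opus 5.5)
The plan is to use subharmonicity together with a strict-convexity argument. The function $u(z)=|z|$ is convex and therefore subharmonic on $\C$, so the sub-mean-value property already gives $\gamma(c,r)\geq0$. The real work is strictness, and I would obtain it directly from the triangle inequality instead of appealing to the theory of harmonic functions.

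First I treat the case $c=0$ separately, where $\gamma(0,r)=r>0$ is immediate. For $c\neq0$, I rotate so that $c=|c|>0$; this is harmless because the circular mean is invariant under rotation. I then pair antipodal points $\theta$ and $\theta+\pi$. Since
\[
|c+re^{i\theta}|+|c-re^{i\theta}|\geq|2c|=2|c|
\]
for every $\theta$, averaging over $\theta\in[0,2\pi]$ gives $\gamma(c,r)\geq0$. Equality in the triangle inequality holds only when $c+re^{i\theta}$ and $c-re^{i\theta}$ are nonnegative multiples of one another. For $c>0$ real this forces $re^{i\theta}$ to be real, so it can happen only for $\theta\in\{0,\pi\}$ (and, when $r>c$, even those angles give strict inequality).

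To conclude, observe that the integrand
\[
\theta\mapsto|c+re^{i\theta}|+|c-re^{i\theta}|-2|c|
\]
is continuous, nonnegative, and strictly positive for $\theta\notin\{0,\pi\}$. For instance, at $\theta=\pi/2$ it equals $2\sqrt{c^2+r^2}-2c>0$. A continuous nonnegative function that is positive at some point has positive integral, and so $\gamma(c,r)>0$.

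The main obstacle is only bookkeeping: the antipodal pairing has to be stated with the correct factor $\tfrac12$ in the average, and the equality case has to be handled correctly. The explicit value at $\theta=\pi/2$, combined with continuity, avoids any delicate analysis of the equality cases. An alternative route would be an explicit computation via the elliptic-integral formula or via the Jensen-type identity for $\log$. I would avoid that, since the elementary argument above suffices and needs no quantitative bound.
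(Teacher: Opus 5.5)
Your argument is correct and is essentially the paper's proof. The paper also pairs $\theta$ with $\theta+\pi$, gets $\varphi(\theta)=|c+re^{i\theta}|+|c-re^{i\theta}|-2|c|\geq0$ from the triangle inequality, and evaluates at the angle where $re^{i\theta_0}=irc/|c|$ (your $\theta=\pi/2$ after rotation) to obtain $2\sqrt{|c|^2+r^2}-2|c|>0$, then concludes by continuity. The only differences are that the paper picks $\theta_0$ directly instead of rotating, and that your subharmonicity remark and equality-case discussion are not needed.
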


\begin{proof}
Set
\[
 \varphi(\theta)=|c+re^{i\theta}|+|c-re^{i\theta}|-2|c|.
\]
The triangle inequality gives $\varphi\geq0$, and $\varphi$ is continuous. If
$c=0$, then $\varphi\equiv2r>0$. If $c\neq0$, choose $\theta_0$ with
$re^{i\theta_0}=irc/|c|$. The Pythagorean identity gives
\[
 \varphi(\theta_0)=2\sqrt{|c|^2+r^2}-2|c|>0.
\]
Thus continuity implies
$\int_0^{2\pi}\varphi(\theta)\,d\theta>0$. The substitution
$\theta\mapsto\theta+\pi$ gives
\[
 2\gamma(c,r)
 =\frac1{2\pi}\int_0^{2\pi}\varphi(\theta)\,d\theta>0,
\]
as required.
\end{proof}

Next we record the mean value step in the exact form needed, with the strict
radius margin made explicit.

\begin{lemma}[Mean value property on a zero-free disk]\label{lem:mvp}
Let $c\in\C$ and $\rho>0$, let $G$ be holomorphic and zero-free on
$B(c,\rho)$, and let $0<r<\rho$. Then
\[
 \frac1{2\pi}\int_0^{2\pi}\log|G(c+re^{i\theta})|\,d\theta=\log|G(c)|.
\]
\end{lemma}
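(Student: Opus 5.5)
The plan is to reduce the statement to the classical mean value property for harmonic functions, using the standard fact that a zero-free holomorphic function on a disk has a holomorphic logarithm there.

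First, the disk $B(c,\rho)$ is simply connected and $G$ is holomorphic and zero-free on it. Hence $G'/G$ is holomorphic on $B(c,\rho)$ and has a primitive $H$. We adjust the additive constant so that $e^{H(c)}=G(c)$. The function $(Ge^{-H})'=G'e^{-H}-GH'e^{-H}$ vanishes identically, so $G=e^{H}$ on the whole disk. It follows that $\log|G|=\Real H$ is harmonic on $B(c,\rho)$, being the real part of a holomorphic function.

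Next, fix $0<r<\rho$. The closed disk $\overline{B(c,r)}$ lies inside $B(c,\rho)$, and this is exactly why the strict margin $r<\rho$ is needed. Apply Cauchy's integral formula to $H$ on the circle $|\zeta-c|=r$ and parametrize by $\zeta=c+re^{i\theta}$. This gives
\[
 H(c)=\frac1{2\pi}\int_0^{2\pi}H(c+re^{i\theta})\,d\theta .
\]
Taking real parts yields
\[
 \frac1{2\pi}\int_0^{2\pi}\log|G(c+re^{i\theta})|\,d\theta=\Real H(c)=\log|G(c)|,
\]
which is the assertion. The integrand is continuous on the circle, so the integral is an ordinary Riemann integral and no measurability issue arises.

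None of these steps is a real obstacle; the only point needing care is the existence of the holomorphic logarithm. This is where both zero-freeness and simple connectivity of the disk enter. Concretely, it rests on the existence of a primitive of a holomorphic function on a disk, which follows from Cauchy's theorem for disks or from integrating the power series of $G'/G$ term by term about $c$. The radius of convergence of that series is at least $\rho$, because $G'/G$ is holomorphic on $B(c,\rho)$.
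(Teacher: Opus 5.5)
Your proof is correct and follows essentially the same route as the paper: you take a holomorphic logarithm of $G$ on the simply connected disk, so $\log|G|$ is the real part of a holomorphic function, and then read off the mean value identity on the circle of radius $r<\rho$. The only difference is that you derive the mean value step directly from Cauchy's integral formula for $H$, whereas the paper cites the harmonic mean value property; this is the same argument unwound one level.
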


\begin{proof}
Since $B(c,\rho)$ is simply connected and $G$ is zero-free there, $G$ has a
holomorphic logarithm $g$ on $B(c,\rho)$ \cite[Chapter~IV]{Conway}. Thus
$\log|G|=\Real g$ is harmonic. The strict inequality $r<\rho$ gives
$\overline{B(c,r)}\subset B(c,\rho)$, so the harmonic mean-value property on
$B(c,r)$ gives the asserted identity.
\end{proof}

Now fix $c\in\Q+i\Q$ with $c\neq0$ and a rational $R$ with $0<R<|c|$, and put
\begin{equation}\label{eq:margin}
 r=\frac R2<R,
\end{equation}
so that $\overline{B(c,r)}\subset B(c,R)$ with a strict radius margin. On the
circle $|z-c|=r$ we have
\[
 0<\delta:=|c|-r\leq|z|\leq|c|+r=:M,
\]
the left inequality because $r<R<|c|$. Consequently there is an integer
$n_0=n_0(c,R)\geq1$ such that
\begin{equation}\label{eq:largen}
 1\leq\delta\sqrt n\quad\text{and}\quad M\leq\sqrt n
 \qquad\text{for all }n\geq n_0,
\end{equation}
and then the hypotheses of Lemma~\ref{lem:pointwise} hold at every point of
the circle $|z-c|=r$, with the same $n_0$ for every $\theta\in[0,2\pi]$.

On a zero-free disk, the circular average of $p_n$ gains
$\gamma(c,r)\sqrt n$ over its centre, whereas the circular average of
$\log|F_n|$ equals its central value. We record the resulting pointwise
shortfall in a real-valued measurable form, without changing it on the
zero-free event. Define
\[
 \ell(w)=\begin{cases}\log|w|,&w\neq0,\\ 0,&w=0,\end{cases}
 \qquad
 D_{n,z}=\max\{p_n(z)-\ell(F_n(z))-L_n,\,0\},
\]
and, for $n\geq n_0$,
\[
 \Av D_n=\frac1{2\pi}\int_0^{2\pi}D_{n,c+re^{i\theta}}\,d\theta .
\]
The function $\ell$ is Borel measurable on $\C$, and $(\omega,z)\mapsto
F_n(\omega,z)$ is jointly continuous, so
$(\omega,\theta)\mapsto D_{n,c+re^{i\theta}}(\omega)$ is jointly measurable
on $\Omega\times[0,2\pi]$ and $\Av D_n$ is a measurable function of
$\omega$.

Because $\ell(w)\geq\log|w|$ for every $w\in\C$ in the
extended-real order, with equality unless $w=0$, we have, for $s\geq0$,
\[
 \{D_{n,z}>s\}\subseteq
 \{p_n(z)-\log|F_n(z)|>s+L_n\}.
\]
Combining this with Lemma~\ref{lem:pointwise} gives the tail hypothesis of
Lemma~\ref{lem:circle-tail} explicitly: for $n\geq n_0$, every $z$ on the
circle $|z-c|=r$ and every $s\geq0$,
\begin{equation}\label{eq:bridge}
 \Pp\{D_{n,z}>s\}
 \leq\Pp\{p_n(z)-\log|F_n(z)|>s+L_n\}
 \leq e^{-2s}.
\end{equation}
So Lemma~\ref{lem:circle-tail} applies to $Y(\omega,\theta)
=D_{n,c+re^{i\theta}}(\omega)$, with $\Av Y=\Av D_n$.

Let
\[
 H_n(c,R)=\{\omega\in\Omega:F_n(\omega,\cdot)\text{ has no zero in }B(c,R)\}.
\]

\begin{lemma}[Measurability of the hole event]\label{lem:measurable}
$H_n(c,R)\in\mathcal F$.
\end{lemma}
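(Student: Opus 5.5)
We will show that the complement $\Omega\setminus H_n(c,R)$, the event that $F_n(\omega,\cdot)$ has a zero in $B(c,R)$, is a countable union of closed sets, hence Borel. The key tool is the joint continuity of $(\omega,z)\mapsto F_n(\omega,z)$ on $\Omega\times\C$, established in Section~\ref{sec:fock}, together with the compactness of $\Omega$.

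Exhaust the open disk by closed disks: for integers $j\geq1$ with $1/j<R$, put $K_j=\overline{B(c,R-1/j)}$, so that $B(c,R)=\bigcup_j K_j$. Then
\[
 \Omega\setminus H_n(c,R)=\bigcup_{j}E_j,\qquad
 E_j=\{\omega\in\Omega:F_n(\omega,z)=0\text{ for some }z\in K_j\}.
\]
We claim that each $E_j$ is closed in $\Omega$. The set $Z=\{(\omega,z)\in\Omega\times K_j:F_n(\omega,z)=0\}$ is closed in the compact space $\Omega\times K_j$, because $F_n$ is jointly continuous, and so $Z$ is compact. The set $E_j$ is the image of $Z$ under the continuous projection onto $\Omega$, so it is compact. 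Since $\Omega$ is metrizable, hence Hausdorff, $E_j$ is closed.

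It follows that $\Omega\setminus H_n(c,R)$ is an $F_\sigma$ set and therefore lies in $\mathcal F$, the Borel $\sigma$-algebra of $\Omega$ (which coincides with the product $\sigma$-algebra, as noted). Hence $H_n(c,R)\in\mathcal F$.

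The only point that needs care is the closedness of the projection. This is where compactness of the fibre $K_j$ is essential, which is why we use closed subdisks instead of projecting the open disk directly. Because $\Omega$ is itself compact, the argument is the standard fact that projection along a compact factor is a closed map.

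As a cross-check, one could instead argue that a zero exists in $K_j$ if and only if $\inf_{z\in K_j\cap(\Q+i\Q)}|F_n(\omega,z)|=0$. This infimum is measurable as a countable infimum of measurable functions, and by continuity and compactness it vanishes exactly when $F_n(\omega,\cdot)$ has a zero on $K_j$. Either route gives the lemma.
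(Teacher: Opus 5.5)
Your proof is correct, and your ``cross-check'' is exactly the paper's argument. The paper also exhausts $B(c,R)$ by closed subdisks, of rational radius $\rho<R$. It writes $\min_{|z-c|\le\rho}|F_n(\omega,z)|$ as a countable infimum over rational points, using only continuity in $z$ and measurability of each evaluation $\omega\mapsto F_n(\omega,z)$, and then sets $H_n(c,R)=\bigcap_\rho\{M_\rho>0\}$.

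Your main route keeps the same exhaustion but treats each piece topologically. Joint continuity of $F_n$ on $\Omega\times K_j$, together with compactness, makes the event ``$F_n(\omega,\cdot)$ has a zero in $K_j$'' a closed set. This directly exhibits $H_n(c,R)$ as a $G_\delta$ set, which is slightly more than measurability.

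The price is that you need joint continuity and a topology on $\Omega$. Only compactness of $K_j$ is actually required, via the tube lemma, not compactness of $\Omega$. The paper's countable-infimum argument would work on an abstract probability space with no topology at all, as long as $F_n$ is measurable in $\omega$ and continuous in $z$.
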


\begin{proof}
For rational $0<\rho<R$, let
\[
 Q_\rho=(\Q+i\Q)\cap B(c,\rho),\qquad
 M_\rho(\omega)=\inf_{z\in Q_\rho}|F_n(\omega,z)|.
\]
The set $Q_\rho$ is countable and dense in $\overline{B(c,\rho)}$, including
by interior approximation at boundary points. Hence continuity in $z$ gives
\[
 M_\rho(\omega)=\min_{|z-c|\leq\rho}|F_n(\omega,z)|,
\]
while measurability of each evaluation makes $M_\rho$ measurable. Therefore
\[
 H_n(c,R)=\bigcap_{\substack{\rho\in\Q\\0<\rho<R}}\{M_\rho>0\}
 \in\mathcal F.
\]
Indeed, zero-freeness on $B(c,R)$ gives positivity on every compact subdisk,
and the converse follows by choosing rational $\rho$ with
$|z-c|\leq\rho<R$ for each $z\in B(c,R)$.
\end{proof}

We can now estimate $\Pp(H_n(c,R))$. Fix $n\geq n_0$ and work on the event
$H_n(c,R)$, on which $F_n$ is holomorphic and zero-free on $B(c,R)$ by the
definition of that event. Since $r=R/2<R$ by \eqref{eq:margin},
Lemma~\ref{lem:mvp} applies with $G=F_n$ and $\rho=R$ and gives
\begin{equation}\label{eq:jensen}
 \frac1{2\pi}\int_0^{2\pi}\log|F_n(c+re^{i\theta})|\,d\theta
 =\log|F_n(c)| .
\end{equation}
Inequality \eqref{eq:upper}, applied with $R_0=|c|$ and $z=c$, gives
\begin{equation}\label{eq:centre}
 \log|F_n(c)|\leq p_n(c)+C(c),\qquad C(c)=\frac{|c|+|c|^2}{2}.
\end{equation}
On $H_n(c,R)$ we have $F_n(z)\neq0$ for $z\in\overline{B(c,r)}$, so
$\ell(F_n(z))=\log|F_n(z)|$ there and therefore
$D_{n,z}\geq p_n(z)-\log|F_n(z)|-L_n$ on the circle $|z-c|=r$. Averaging over
that circle and using
\[
 \frac1{2\pi}\int_0^{2\pi}p_n(c+re^{i\theta})\,d\theta
 =\tfrac12\log(n!)+\sqrt n\cdot\frac1{2\pi}\int_0^{2\pi}|c+re^{i\theta}|\,d\theta
 =p_n(c)+\gamma(c,r)\sqrt n,
\]
together with \eqref{eq:jensen} and \eqref{eq:centre}, we obtain on
$H_n(c,R)$ that
\[
 \Av D_n
 \geq p_n(c)+\gamma(c,r)\sqrt n-\log|F_n(c)|-L_n
 \geq\gamma(c,r)\sqrt n-C(c)-L_n=:T_n,
\]
that is,
\begin{equation}\label{eq:inclusion}
 H_n(c,R)\subseteq\{\Av D_n\geq T_n\},
 \qquad
 T_n=\gamma(c,r)\sqrt n-C(c)-2-\log(n+1).
\end{equation}
By Lemma~\ref{lem:gap} we have $\gamma(c,r)>0$, and
$C(c)+2+\log(n+1)=o(\sqrt n)$, so there is $n_1=n_1(c,R)\geq n_0$ with
\begin{equation}\label{eq:Tn}
 T_n\geq\tfrac12\gamma(c,r)\sqrt n>0,
 \qquad n\geq n_1 .
\end{equation}
For $n\geq n_1$, Lemma~\ref{lem:circle-tail} together with
\eqref{eq:bridge}, \eqref{eq:inclusion} and \eqref{eq:Tn} gives
\begin{equation}\label{eq:hole}
 \Pp\bigl(H_n(c,R)\bigr)\leq\Pp\{\Av D_n\geq T_n\}
 \leq3e^{-2T_n}
 \leq3\exp\bigl(-\gamma(c,r)\sqrt n\bigr).
\end{equation}
The right-hand side of \eqref{eq:hole} is summable, since
$\exp(-\gamma(c,r)\sqrt n)\leq n^{-2}$ for all large $n$, and the finitely
many terms with $n<n_1$ are each at most $1$. Hence
\begin{equation}\label{eq:summable-holes}
 \sum_{n=0}^{\infty}\Pp\bigl(H_n(c,R)\bigr)<\infty .
\end{equation}
All the events involved are measurable by Lemma~\ref{lem:measurable}, so the
first Borel--Cantelli lemma \cite[Section~2.3]{Durrett} applies and shows
that
\begin{equation}\label{eq:bc}
 \Pp\Bigl(\limsup_{n\to\infty}H_n(c,R)\Bigr)=0;
\end{equation}
that is, almost surely only finitely many $F_n$ are zero-free in $B(c,R)$.

\begin{proof}[Proof of Theorem~\ref{thm:main}]
Let $\mathcal B$ be the family of disks $B(c,R)$ with $c\in\Q+i\Q$,
$c\neq0$, $R\in\Q_{>0}$ and $R<|c|$; it is countable. We check that every
nonempty open set $U\subset\C$ contains a member of $\mathcal B$. Since $U$
is open and nonempty it contains a point $z_0\neq0$, and there is
$\varepsilon>0$ with $B(z_0,\varepsilon)\subset U$ and
$\varepsilon<|z_0|/2$. Choose $c\in\Q+i\Q$ with $|c-z_0|<\varepsilon/4$;
then $|c|\geq|z_0|-\varepsilon/4>0$, so $c\neq0$. Choose a rational $R$ with
$0<R<\min\{\varepsilon/2,|c|\}$. Then $R<|c|$, and every $z$ with $|z-c|<R$
satisfies $|z-z_0|<\varepsilon/4+\varepsilon/2<\varepsilon$, so
$B(c,R)\subset B(z_0,\varepsilon)\subset U$ and $B(c,R)\in\mathcal B$.

By \eqref{eq:bc}, for each $B=B(c,R)\in\mathcal B$ there is a
$\Pp$-null set $\mathcal N_B$ outside of which only finitely many $F_n$ are
zero-free in $B$. The union $\mathcal N=\bigcup_{B\in\mathcal B}\mathcal N_B$
is a countable union of null sets and is therefore null. With $\Omega_0$ as
in \eqref{eq:nonzero} and $\Omega_2$ as above, we have
$\Pp((\Omega_0\cap\Omega_2)\setminus\mathcal N)=1$; in particular this event
is nonempty. Fix $\omega\in(\Omega_0\cap\Omega_2)\setminus\mathcal N$ and let
$f$ be the entire function \eqref{eq:fock} determined by it.

Then $f$ is transcendental, because $\omega\in\Omega_0$ makes every Taylor
coefficient of $f$ nonzero; moreover $\omega\in\Omega_2$ makes its order
exactly two, and $f$ satisfies \eqref{eq:growth}. Let $U\subset\C$ be
nonempty and open and choose
$B\in\mathcal B$ with $B\subset U$. Since $\omega\notin\mathcal N_B$, only
finitely many $F_n=f^{(n)}$ are zero-free in $B$. Let $N(U)$ be one more than
the largest such index, or let $N(U)=0$ if there is no such index. Then every
$n\geq N(U)$ gives a zero of $f^{(n)}$ in $B$, hence in $U$. This is the first
assertion of Theorem~\ref{thm:main}, and the last assertion follows from it by
Lemma~\ref{lem:cofinite}.
\end{proof}

\section{A consequence for a theorem of Boas and Reddy}
\label{sec:boas-reddy}

Under their standing assumption that $f$ is transcendental, Boas and Reddy
assert that if $f$ is of order at most two and finite type, then there is an
arbitrarily large disk on which $f^{(k)}$ is zero-free for infinitely many
values of $k$ \cite[Theorem~1]{BoasReddyJMAA1973}; see also their announcement
\cite[Theorem~1]{BoasReddy1973}. They clarify that one fixed disk serves
infinitely many derivatives \cite[p.~65]{BoasReddy1973}. Their expanded article also
stated a distinct Theorem~2 \cite{BoasReddyJMAA1973}.
Their published erratum addresses Theorem~2 \cite{BoasReddy1975}.
It withdraws the supporting construction and says that conclusion remains
open.

The proof of Theorem~1 in the expanded article has a quantifier gap before its
rescaling step. Under the negation used there, fix $c_1$ larger than the radius
$c$ introduced in the proof. For each fixed centre, the disk of radius $c_1$
contains a zero of $f^{(k)}$ for all but finitely many $k$. Hence, for each
fixed outer radius $R$, the finitely many disks in a packing admit a common
exceptional threshold $K_R$. The proof then sets
$R=(k\Delta)^{1/2}$ along a subsequence supplied by Lemma~6, but gives no
control of $K_R$ and therefore no reason that
$k\geq K_{(k\Delta)^{1/2}}$. The phrase ``uniformly in $r$'' in Lemma~6
concerns the zero-count estimate $N_k(r)$, not these centre-dependent
exceptional thresholds \cite[p.~469]{BoasReddyJMAA1973}.

Even if that step is granted, division of equation~(3.1) there by $k\Delta$
and passage to large $k$ would require
\[
 \tau+\varepsilon
 <e^{-3}\left(\frac{s}{c_1^2}-\frac1\Delta\right),
\]
where $s$ is the packing constant and $\Delta$ is chosen after $c_1$.
The subsequent rescaling does not remove this dependence: for
$g(z)=f(az)$, the order-two type becomes $|a|^2\tau$ while the corresponding
radius becomes $c_1/|a|$, so $\tau c_1^2$ is unchanged. Thus the printed
proof does not establish Theorem~1 for arbitrary finite type at order two.
The moving-packing gap also remains below order two even when the order-two
type is zero. This diagnosis concerns the printed proof only; it does not rule
out another proof of the claimed conclusion
\cite[pp.~469, 472]{BoasReddyJMAA1973}.

\begin{corollary}
The function in Theorem~\ref{thm:main} is a counterexample to the assertions
of \cite[Theorem~1]{BoasReddyJMAA1973} and
\cite[Theorem~1]{BoasReddy1973} as printed.
\end{corollary}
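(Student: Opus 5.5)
The plan is to check that the function $f$ given by Theorem~\ref{thm:main} meets every hypothesis of the Boas--Reddy Theorem~1 as printed. I would then show that it violates the conclusion.

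\textbf{Hypotheses.} Theorem~\ref{thm:main} gives that $f$ is transcendental, so their standing assumption holds. It also gives that $f$ has order exactly two, hence order at most two. The bound \eqref{eq:growth} gives $\log|f(z)|\leq|z|^2+\tfrac12\log2$. It follows that $\limsup_{r\to\infty}r^{-2}\log\max_{|z|=r}|f(z)|\leq1$, so the type is finite, indeed at most one.

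\textbf{Conclusion.} As clarified at \cite[p.~65]{BoasReddy1973}, the printed conclusion asks for a single fixed disk $B(c,R)$, with $R$ arbitrarily large, on which $f^{(k)}$ is zero-free for infinitely many $k$. I would show that no disk whatever has this property. Take any disk $B(c,R)$ with $R>0$ and set $U=B(c,R)$, a nonempty open set. Theorem~\ref{thm:main} supplies $N(U)$ such that $f^{(n)}$ has a zero in $U$ for every $n\geq N(U)$. So the set of $k$ with $f^{(k)}$ zero-free on $U$ is contained in $\{0,\dots,N(U)-1\}$ and is finite. Equivalently, $A_U(f)$ is cofinite, as in Lemma~\ref{lem:cofinite}(2).

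This holds for every centre and every radius, so in particular for arbitrarily large radii. Hence no disk of any size carries infinitely many zero-free derivatives. This contradicts Theorem~1 of both \cite{BoasReddyJMAA1973} and \cite{BoasReddy1973}.

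\textbf{Main obstacle.} The only delicate point is interpretive rather than computational. One must pin down the quantifier order in the Boas--Reddy statement: one disk serving infinitely many $k$, rather than a disk that may depend on $k$. I would settle it by quoting their clarification on p.~65. With that reading, the corollary is immediate from the uniformity over all open $U$ in Theorem~\ref{thm:main}. I would also remark that the alternative reading, with a disk depending on $k$, is not the assertion as printed.
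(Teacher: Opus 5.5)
Your proposal is correct and follows essentially the same route as the paper. You check that $f$ is transcendental, of order two and, by \eqref{eq:growth}, of type at most one, so it lies in the Boas--Reddy class. You then use the uniformity of Theorem~\ref{thm:main} over all nonempty open sets to conclude that every disk, of any radius, is zero-free for only finitely many derivatives. Your explicit type computation and your remark on the quantifier order (one fixed disk serving infinitely many $k$) spell out what the paper's proof states more briefly.
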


\begin{proof}
Let $f$ be the function supplied by Theorem~\ref{thm:main}. The estimate
\eqref{eq:growth} places $f$ in the growth class assumed by Boas and Reddy.
For every fixed nonempty disk $D$, Theorem~\ref{thm:main} supplies an integer
$N(D)$ such that $f^{(n)}$ has a zero in $D$ whenever $n\geq N(D)$.
Consequently only the finitely many derivatives with $n<N(D)$ can be
zero-free in $D$. Thus no disk, of any radius, is zero-free for infinitely
many successive derivatives of $f$, contrary to the printed conclusion.
\end{proof}

\section*{Acknowledgements}
The author thanks Xi Chen for detailed suggestions that improved the
introduction, theorem formulation, order calculation and circle-average
estimate. He also thanks Harold P. Boas for detailed comments on an earlier
version.

\section*{Declarations}
\noindent\textit{Use of AI.} In July--August 2026, the author used OpenAI
GPT-5.6 Sol, Anthropic Claude Fable~5 and Anthropic Claude Opus~5 to assist
with deriving the small-ball estimate, editing prose, retrieving sources,
revising the manuscript and auditing arguments. The author checked every AI-assisted
mathematical and bibliographic claim and takes sole responsibility for the
mathematical content and exposition of this article.

\noindent\textit{Competing interests.} The author declares no competing
interests. \textit{Funding.} The author received no specific funding for this
work. \textit{Data availability.} No data were generated or analysed in this
theoretical study.

\end{document}